\documentclass[11pt]{article}

\usepackage[margin=1in]{geometry}
\usepackage{amsmath,amssymb,amsthm,mathtools}
\usepackage{microtype}
\usepackage[hidelinks]{hyperref}
\usepackage{xcolor}

\newtheorem{theorem}{Theorem}
\newtheorem{lemma}[theorem]{Lemma}
\newtheorem{proposition}[theorem]{Proposition}
\newtheorem{corollary}[theorem]{Corollary}
\newtheorem{definition}[theorem]{Definition}
\newtheorem{question}[theorem]{Question}

\theoremstyle{definition}

\newcommand{\eps}{\varepsilon}

\title{Forcing monochromatic subdivisions}
\author{Gabriel Collado\footnote{Department of Mathematics, Statistics and Computer Science, University of Illinois, Chicago, IL 60607. Email: gcoll8@uic.edu.} \and 
Dhruv Mubayi\footnote{Department of Mathematics, Statistics and Computer Science, University of Illinois, Chicago, IL 60607. Email: mubayi@uic.edu. Research partially supported by NSF Awards DMS-2153576 and DMS-2552740
and a Simons Fellowship.}}
\date{}
\begin{document}

\maketitle

\begin{abstract}
We prove that for every integers $d \ge 1$ and $s\ge2$ there exists an integer $D$, that depends only on $d$ and $s$, such that for every graph $P$ with maximum degree at most $ d$, there is a graph $H$ with maximum degree at most $D$ in which every $s$-coloring of $V(H)$ yields a monochromatic subdivision of $P$. 
\end{abstract}

%--------------------------------------------------------------------
\section{Introduction}
%--------------------------------------------------------------------

Ramsey theory studies the emergence of monochromatic substructures under arbitrary colorings. In its classical form, one considers edge-colorings of complete graphs and seeks monochromatic copies of a fixed graph. More precisely, the \textit{classical Ramsey number} of a graph $G$,
denoted $R(G;s)$, is defined as
\[
  R(G;s) = \min\bigl\{ |V(K_n)| : K_n \xrightarrow{s} G \bigr\},
\]
where $K_n \xrightarrow{s} G$ means that every coloring of $E(K_n)$ with $s$ colors produces a monochromatic copy of $G$. The existence of such $n$ for any $G$ and $s$ was established by Ramsey~\cite{Ramsey1930} and, independently, by Erd\H{o}s and Szekeres~\cite{ErdosSzekeres1935}.
When $H \xrightarrow{s} G$ we call $G$ the \textit{target graph} and $H$ a \textit{Ramsey host} for $G$. When $s = 2$ we omit the number of colors and write $R(G)$ for $ R(G;2)$ and $H \xrightarrow{} G $ for $ H \xrightarrow{2} G$.
 
One can ask the same question for any monotone graph parameter $\rho$. The
\textit{$\rho$-Ramsey number} of a graph $G$ is defined as
\[
  R_\rho(G;s) = \min\bigl\{ \rho(H) : H \xrightarrow{s} G \bigr\}.
\]
This framework captures several well-studied variants. The classical Ramsey number corresponds to
$\rho(H) = v(H)$, the number of vertices of $H$. Another natural choice is $\rho(H) = e(H)$,
the number of edges of $H$, which gives the \textit{size Ramsey number} $\hat{r}(G;s)$,
introduced by Erd\H{o}s, Faudree, Rousseau, and Schelp~\cite{ErdosFaudreeRousseauSchelp1978}. A third
natural choice is $\rho(H) = \Delta(H)$, the maximum degree of $H$, which gives the
\textit{degree Ramsey number} $R_\Delta(G;s)$, studied systematically by Kinnersley, Milans, and
West~\cite{KinnersleyMilansWest2012}.
 
We focus on the case $\rho = \Delta$. The central question in this context is the following:
 
\begin{question}[\cite{HornMilansRodl2014}]\label{qn:main}
  Is $R_\Delta(G;s)$ bounded by a function of $\Delta(G)$ and $s$ alone?
\end{question}

Based on this question, we give the following definition:

\begin{definition}
    A family $\mathcal{G}$ of graphs is said to be \textit{degree-Ramsey bounded}, or \textit{$R_\Delta$-bounded}, if for every $s\in \mathbb{Z}^+$ and for every $G \in \mathcal{G}$ the degree-Ramsey number $R_\Delta(G;s)$ is bounded by a function depending only on $\Delta(G)$ and $s$.
\end{definition}

Question~\ref{qn:main} asks if the family of all graphs is \textit{$R_\Delta$-bounded}.
 
Several families are known to be $R_\Delta$-bounded.
Trees are $R_\Delta$-bounded with the explicit bound $R_\Delta(T;s) \leq 2s(\Delta(T)-1)$, as observed by Jiang. Cycles are also $R_\Delta$-bounded: Jiang, Milans, and West~\cite{JiangMilansWest2013} proved that $R_\Delta(C_n; 2)$ is at most $96$ for even cycles and at most $3458$ for odd cycles. More generally, Horn, Milans, and R\"{o}dl~\cite{HornMilansRodl2014} proved that the family of closed blowups of trees is $R_\Delta$-bounded. Recall that the \textit{closed $k$-blowup} of a graph $G$, denoted $G[k]$, is obtained by replacing every vertex of $G$ with a clique of size $k$ and every edge with a complete bipartite graph $K_{k,k}$. This last result plus the fact that every graph of bounded treewidth and bounded maximum degree is a subgraph of a closed blowup of a bounded-degree tree imply that the family of graphs with bounded treewidth is $R_\Delta$-bounded, as stated in Corollary~\ref{corol:bndtreewidth}.

Recall that a \textit{tree decomposition} of a graph $G$ is a tree $\mathcal{T}$ together with a collection of \textit{bags} $\{B_t \subseteq V(G) : t \in V(\mathcal{T})\}$ satisfying the following three conditions: every vertex of $G$ belongs to some bag; for every edge $uv \in E(G)$, some bag contains both $u$ and $v$; and for every vertex $v \in V(G)$, the set of nodes $t$ with $v \in B_t$ induces a connected subtree of $\mathcal{T}$. The \textit{width} of a tree decomposition is the size of its largest bag minus one, and the \textit{treewidth} of $G$, denoted $\mathrm{tw}(G)$, is the minimum width over all tree decompositions of $G$.

The following lemma states the fact that every graph of bounded treewidth and bounded maximum degree is a subgraph of a closed blowup of a bounded-degree tree in a precise manner.
\begin{lemma}[\cite{boundedtreewidth}]\label{lem:bddtreewidth}
    Every graph $G$ with treewidth $\mathrm{tw}(G) = w$ and maximum degree $\Delta(G) =  d$ is a subgraph of the closed blowup $T [18wd]$ for some tree $T$ of maximum degree at most $18wd^2$.
\end{lemma}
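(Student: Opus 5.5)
We prove the statement by the standard route for bounded treewidth and bounded degree. First take a good tree decomposition, then make its tree have bounded degree, then read off the embedding into a blowup. Since the lemma is cited from~\cite{boundedtreewidth}, a sketch suffices; we do not aim for optimal constants.

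\textbf{Step 1: a decomposition with small bags and bounded tree degree.} Start from a tree decomposition $(\mathcal{T},\{B_t\})$ of $G$ of width $w$, so every bag has at most $w+1$ vertices. The tree $\mathcal{T}$ may have unbounded degree. To control it, we would not use $\mathcal{T}$ directly but pass to a \emph{tree-partition*} of $G$. This is a partition of $V(G)$ into parts indexed by the nodes of a tree $T$ such that every edge of $G$ lies inside one part or between the parts of adjacent nodes of $T$. The known argument (Ding--Oporowski, refined by Wood) builds this partition by a layering. Root at a vertex $r$ and take BFS-like layers relative to a separator structure coming from the tree decomposition. At each step, the current part $X$ is separated from the rest. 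The next parts are the vertex sets $N(X)\cap C$ for the components $C$ of $G$ minus the already processed vertices.

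\textbf{Step 2: bounding part sizes and the degree of $T$.} Each part has size bounded in terms of $w$ and $d$. The treewidth bound supplies balanced separators of size at most $w+1$, and whenever a part threatens to grow too large we re-split it using such a separator. Each new part is contained in the neighborhood of a previous part plus a separator. This keeps every part of size $O(wd)$, and with the paper's constant, at most $18wd$. The degree of a node of $T$ is at most the number of edges leaving its part, hence at most $d$ times the part size. This gives $\Delta(T)\le 18wd\cdot d=18wd^2$.

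\textbf{Step 3: embedding into the blowup.} Map each part injectively into the clique of size $18wd$ replacing its node of $T$; part size at most $18wd$ guarantees room. An edge inside a part lands inside a clique. An edge between adjacent parts lands in the complete bipartite graph $K_{18wd,18wd}$. Hence $G\subseteq T[18wd]$.

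The main obstacle is Step 2: arranging the splitting so that part sizes stay bounded. A naive BFS layering gives parts that grow geometrically with $d$. The fix is to interleave separator splitting with the layering, carrying the invariant that the boundary of each current piece has size at most a constant times $wd$.
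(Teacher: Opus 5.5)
Your Step 3 and the degree count in Step 2 are fine. Suppose $V(G)$ is partitioned into parts of size at most $m$, indexed by a tree $T$, so that every edge of $G$ lies inside a part or between adjacent parts. Then $G\subseteq T[m]$. If moreover every edge of $T$ carries an edge of $G$, then $\deg_T$ is at most $d$ times the part size. This reduction is how the cited result is used, and the paper gives nothing beyond the citation.

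The gap is the claim that parts have size $O(wd)$, which is the whole content of the lemma. The mechanism you describe does not deliver it, for two reasons.
\begin{itemize}
\item A part cannot be ``re-split'' after the fact. Let $X$ be the current part and $C$ a component of the unprocessed graph. Then $C$ is connected and avoids $X$, so it lies in a single branch of $T$ at the node of $X$. Hence all of $N(X)\cap C$ must go into the one part of that branch adjacent to $X$, however large it is.
\item ``Each new part is contained in the neighborhood of a previous part plus a separator'' only bounds a new part by $d$ times the old one plus $w+1$. This compounds from level to level.
\end{itemize}
The constant $18wd$ is imported from the statement, not derived.

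What is missing is to stop the growth one level earlier: enlarge the current part before passing to the children, and show the enlargement is cheap. Here is one way.
\begin{itemize}
\item Put $\delta=d(w+1)$ and $a=4\delta$. Suppose the current piece $C$ has attachment set $S$ (its vertices adjacent to the parent part) with $|S|\le a$.
\item Start with $V=S$. While some component $D$ of $G[C]-V$ has $b_D:=|N(V)\cap D|>a$, add to $V$ a set $Z\subseteq D$ with $|Z|\le w+1$ such that every component of $G[D]-Z$ contains at most $b_D/2$ vertices of $N(V)\cap D$. Such a $Z$ is a suitable bag of a width-$w$ tree decomposition (the standard balanced-separator lemma).
\item The new components then have $b\le b_D/2+\delta$ and total $b$ at most $b_D+\delta$. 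Check three cases: zero, one, or at least two new components have $b>2\delta$. In each case the potential $\Phi=\sum_D\max(0,b_D-2\delta)$ drops by at least $\delta$ per added separator.
\item Initially $\Phi\le d|S|$, so at most $|S|/(w+1)$ separators are added. Hence $|V|\le 2|S|\le 8d(w+1)$.
\item Every child attachment set $N(V)\cap D$ has size at most $a$, which closes the induction. Start from $S=\{r\}$ in each component of $G$.
\end{itemize}
This gives parts of size at most $8d(w+1)\le 16wd$ for $w\ge 1$. By your count it also gives $\Delta(T)\le 16wd^2$, since each child is adjacent to $V$. Alternatively, simply cite the tree-partition theorem, as the paper does.
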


Thus, the result of Horn, Milans, and R\"{o}dl~\cite{HornMilansRodl2014} yields the following corollary.
 
\begin{corollary}\label{corol:bndtreewidth}
  The family of graphs with bounded treewidth is $R_\Delta$-bounded.
  More precisely, for every integers $s \ge 2$,  $d \ge 1$ and $w \ge 0$ there exists an integer $D$, depending only on $s, d$ and $w$, such that for every graph $P$ with maximum degree at most $d$ and treewidth at most $w$, there is a graph $H$ with maximum degree at most $D$ such that every $s$-coloring of $E(H)$ yields a monochromatic copy of $P$.
\end{corollary}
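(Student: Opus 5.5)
The plan is to combine Lemma~\ref{lem:bddtreewidth} with the theorem of Horn, Milans, and R\"{o}dl~\cite{HornMilansRodl2014}. I will use that theorem in the following form: for every $s\ge 2$, $k\ge 1$ and $\Delta\ge 1$ there is an integer $D_0=D_0(s,k,\Delta)$ such that for every tree $T$ with $\Delta(T)\le \Delta$ there is a host $H$ with $\Delta(H)\le D_0$ and $H\xrightarrow{s} T[k]$. Here we need that the degree bound depends only on $s$, $k$ and the maximum degree of $T$, and not on the size of $T$; this is exactly what it means for the family of closed blowups of trees to be $R_\Delta$-bounded.

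Fix $s\ge 2$, $d\ge 1$, $w\ge 0$, and let $P$ have $\Delta(P)\le d$ and $\mathrm{tw}(P)\le w$. First I dispose of the degenerate cases so that the parameters in Lemma~\ref{lem:bddtreewidth} are positive.
\begin{itemize}
\item If $w=0$ or $d=0$, then $P$ has no edges. Since $P$ has at least one vertex, $H=P$ itself works: it has maximum degree $0$, and every $s$-coloring of its empty edge set trivially yields a monochromatic copy of $P$.
\item Otherwise, write $w'=\mathrm{tw}(P)$ and $d'=\Delta(P)$, so that $1\le w'\le w$ and $1\le d'\le d$. (A graph with at least one edge has treewidth at least $1$.)
\end{itemize}

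In the main case, Lemma~\ref{lem:bddtreewidth} gives a tree $T$ with $\Delta(T)\le 18w'd'^2\le 18wd^2$ such that $P\subseteq T[18w'd']$. Since $18w'd'\le 18wd=:k$ and $G[k']\subseteq G[k]$ whenever $k'\le k$, we get $P\subseteq T[k]$. Set $\Delta:=18wd^2$ and apply the Horn--Milans--R\"{o}dl theorem to $T$ with these parameters. This yields $H$ with $\Delta(H)\le D_0(s,18wd,18wd^2)$ and $H\xrightarrow{s}T[k]$.

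Now take any $s$-coloring of $E(H)$. It contains a monochromatic copy of $T[k]$, and this copy contains a copy of $P$, all of whose edges have the same color. So we may take
\[
D=\max\{1,\,D_0(s,18wd,18wd^2)\},
\]
which depends only on $s$, $d$ and $w$.

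The only point that needs care is the monotonicity in the degree parameter. Lemma~\ref{lem:bddtreewidth} uses the exact values $\mathrm{tw}(P)$ and $\Delta(P)$, so we pass to the uniform bounds $k$ and $\Delta$ via $T[k']\subseteq T[k]$. We also must make sure that the Horn--Milans--R\"{o}dl bound is applied to the class of all trees of maximum degree at most $\Delta$, so that $D_0$ is uniform over all such $T$. If the cited result is stated only for trees of maximum degree exactly $\Delta$, I would embed $T$ into a larger tree of maximum degree exactly $\Delta$ by attaching leaves; its $k$-blowup then contains $T[k]$.
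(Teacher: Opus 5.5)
Your proposal is correct and is essentially the paper's own argument: Lemma~\ref{lem:bddtreewidth} places $P$ inside $T[18wd]$ for a tree $T$ of maximum degree at most $18wd^2$, and the Horn--Milans--R\"{o}dl theorem for closed blowups of trees then gives a host whose maximum degree depends only on $s$, $d$, $w$. One small slip: your first case should read ``$P$ has no edges'' (that is, $\mathrm{tw}(P)=0$ or $\Delta(P)=0$) rather than ``$w=0$ or $d=0$'', because an edgeless $P$ with $w,d\ge 1$ would otherwise fall into your second case, where $w',d'\ge 1$ fails; such a $P$ is handled by the same trivial host $H=P$.
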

One can consider a natural variation of Ramsey numbers by coloring the \textit{vertices} of the host graph rather than its edges. This is the direction we explore. We define the \textit{vertex-$\rho$-Ramsey number} of $G$ as
\[
  R^v_\rho(G;s) = \min\bigl\{ \rho(H) : H \xrightarrow{s}_{\mathrm{vtx}} G \bigr\},
\]
where $H \xrightarrow{s}_{\mathrm{vtx}} G$ means that every $s$-coloring of $V(H)$ contains a
monochromatic copy of $G$ as a subgraph in some color class. Analogously, we get the following definition:

\begin{definition}
    We say a family $\mathcal{G}$ of graphs is \textit{vertex-$R_\Delta$-bounded} if for every $s\in \mathbb{Z}^+$ and for every $G \in \mathcal{G}$ the vertex-degree-Ramsey number $R^v_\Delta(G;s)$ is bounded by a function depending only on $\Delta(G)$ and $s$.
\end{definition}

Vertex Ramsey theory has been studied since the work of Folkman~\cite{Folkman1970} and Ne\v{s}et\v{r}il, and R\"{o}dl~\cite{NesetrilRodl1976}, who established the existence of
vertex Ramsey hosts of bounded clique number.  The main open problem here is the natural analogue of Question~\ref{qn:main}.

\begin{question}\label{qn:vertex-main}
  Is $R^v_\Delta(G;s)$ bounded by a function of $\Delta(G)$ and $s$ alone?
\end{question}
 Recall that a \textit{subdivision} of a graph $G$ is any graph obtained by replacing each edge of $G$ with an internally vertex-disjoint path of length at least one.
While we are not able to answer Question~\ref{qn:vertex-main}, our main result gives a positive answer once the required monochromatic target structure is enlarged from $G$ itself to the collection of subdivisions of $G$. Our main result is the following:
 
\begin{theorem}\label{thm:main}
  For every integers $s \ge 2$ and  $d \ge 1$ there exists an integer $D$, that depends only on $d$ and $s$, such that for every graph $P$ with maximum degree at most $ d$, there is a graph $H$ with maximum degree at most $D$ such that every $s$-coloring of $V(H)$ yields a monochromatic subdivision of $P$.
\end{theorem}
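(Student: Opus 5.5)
The plan is to reduce to a target of bounded treewidth by subdividing, and then to pass from vertex colorings to edge colorings using Corollary~\ref{corol:bndtreewidth}.

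\textbf{Step 1: embed $P$ into a fixed bounded-degree graph.} Every graph $P$ with $\Delta(P)\le d$ and $n$ vertices is, after subdivision, a subgraph of a grid-like graph. Route each edge of $P$ along disjoint paths in a large grid, or more robustly in a subdivided complete graph. However, grids and complete graphs have unbounded treewidth, so Corollary~\ref{corol:bndtreewidth} cannot be applied to them directly.

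\textbf{Step 2: work with a cycle-like target instead.} I would therefore arrange things so that the monochromatic structure we need to find has bounded treewidth. One natural choice is a long path or a tree, whose monochromatic copy is then ``closed up'' into a subdivision of $P$ using extra structure of $H$.

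The key idea is to build $H$ from the vertices and edges of a bounded-degree graph $H_0$. Each vertex $x$ of $H_0$ is replaced by a bounded-size gadget, for example a clique blowup of size $k$. Each edge of $H_0$ is replaced by a complete bipartite graph between the corresponding gadgets, so that $H=H_0[k]$ with $k$ depending on $s$ and $d$.

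Given an $s$-coloring of $V(H)$, each gadget has, by pigeonhole, a color appearing at least $k/s$ times. This gives an induced $s$-coloring $c$ of $V(H_0)$ together with monochromatic sub-blobs. What we need from $H_0$ is then that every $s$-coloring of its vertices contains a monochromatic subdivision of $P$. So blowing up alone only boosts multiplicity; it does not by itself solve the vertex-coloring problem.

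\textbf{Step 3: a transfer from vertex colorings to edge colorings.} The heart of the argument is a lemma converting a vertex coloring into an edge coloring. Take $H_0$ to be the line graph, or better a subdivision, of an edge-Ramsey host. Concretely, start from a bounded-degree graph $G$ with $G\xrightarrow{s'}T$, where $T$ is a bounded-degree, bounded-treewidth universal-type graph (for example a tree blown up, as in Lemma~\ref{lem:bddtreewidth}), and let $H$ be obtained from $G$ by subdividing each edge once. Then $\Delta(H)=\Delta(G)$.

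A vertex $s$-coloring of $H$ assigns to each edge $e$ of $G$ the color of its subdivision vertex, and assigns colors to the original vertices. Combining these gives an edge coloring of $G$ with $s'=s^3$ colors: the color of $e=uv$ is the triple (color of $u$, color of $e$'s midpoint, color of $v$), with the endpoints suitably ordered. To make the three entries agree, one first restricts to a monochromatic part via a Ramsey step, so that it suffices to have $s'=s^2$ with an orientation trick. A monochromatic copy of $T$ in $G$, with all three entries equal, yields a monochromatic subdivision of $T$ in $H$.

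\textbf{Step 4: the main obstacle.} It remains to get a subdivision of an \emph{arbitrary} $P$ from a bounded-treewidth target $T$. This is the step I expect to be hardest, because $P$ itself may have huge treewidth. Monochromatic copies of bounded-treewidth graphs only give subdivisions of bounded-treewidth graphs, so I must iterate.

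My first attempt would be to take $T$ to be a closed blowup $Q[m]$ of a long path $Q$. I would try to route the edges of $P$ through ``lanes'' of this blowup, using that the subdivision allows arbitrarily long paths. Edges may cross, and a crossing in the blowup can be resolved with $m$ lanes. However, making this work requires a number of lanes proportional to the cutwidth of $P$, which is not bounded.

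If that fails, I would instead run the argument above once per level of a recursive construction. In that version the host $H$ is built so that its degree stays bounded while its depth grows with $|P|$, and the monochromatic color is fixed by a pigeonhole over levels.
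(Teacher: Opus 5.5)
There is a genuine gap. The proposal never reaches a proof, and two of its load-bearing steps fail outright.

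\textbf{Step 3 fails.} If $H$ is obtained from $G$ by subdividing every edge once, then $H$ is bipartite, with parts $V(G)$ and the set of subdivision vertices. Color every original vertex red and every subdivision vertex blue. Both color classes are then independent sets, so there is no monochromatic edge at all, whatever Ramsey property $G$ has. No Ramsey step or orientation trick can force the three entries of your triple to agree.

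\textbf{Step 4 is blocked by treewidth, as you suspected.} $P$ is a minor of each of its subdivisions, and treewidth is minor-monotone. So a monochromatic copy of a graph of treewidth at most $w$ contains subdivisions only of graphs of treewidth at most $w$. Bounded-degree graphs $P$, such as grids and expanders, have unbounded treewidth, so routing through Corollary~\ref{corol:bndtreewidth} cannot succeed. ``Closing up'' such a copy using extra structure of $H$ would need monochromatic connecting paths outside the copy, which is the original problem again. The recursive or level-by-level version is not specified enough to check.

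\textbf{The missing idea is in your Step 2, which you dismissed.} The blowup does more than boost multiplicity: it converts minors into subdivisions. If $\Delta(P)\le d$ and $P$ is a minor of $Q$, then $B_d(Q)$ contains a subdivision of $P$ (Lemma~\ref{lem:lift}). Inside each branch set, the paths from a root to the at most $d$ attachment vertices are lifted using distinct copies in the blown-up clusters, which makes them internally disjoint. So the base graph only has to force a \emph{minor} of $P$ inside every vertex set of density at least $1/s$.

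\textbf{How the paper uses this.} It takes an $r$-regular spectral expander $G$ on $N$ vertices with $\lambda(G)\le\eta r$, where $r$ depends only on $s$ and $N$ grows with $|P|$. It sets $H=B_{sd}(G)$, so $\Delta(H)=sdr$ is independent of $P$. Majority-coloring the clusters gives a set $S\subseteq V(G)$ with $|S|\ge N/s$ whose clusters each contain at least $d$ vertices of one common color. The expander mixing lemma plus a pruning argument (Lemma~\ref{lem:robust}) yields $U\subseteq S$ with $G[U]$ a small-set expander. Krivelevich--Nenadov (Theorem~\ref{thm:KN}) then puts $P$ as a minor in $G[U]$, and the lift gives a monochromatic subdivision. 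This route avoids the treewidth obstruction entirely, because the degree is fixed while only $N$ grows.
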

 
Our proof shows that blowups of bounded-degree expanders already suffice to enforce the desired vertex Ramsey property. The argument combines the expansion properties of a bounded-degree expander with a minor-universality theorem of Krivelevich and Nendov~\cite{KN}
to force a monochromatic minor of $P$ inside some color class, and then uses a blowup construction to lift this minor to a subdivision of $P$.

We also establish vertex analogues of \cite{HornMilansRodl2014} and Corollary~\ref{corol:bndtreewidth}. Namely, we prove that the family of closed blowups of trees, and more generally the family of graphs with bounded treewidth, are both vertex-$R_\Delta$-bounded. These results and proofs can be found in Section~\ref{sec:treewidth}.

%--------------------------------------------------------------------
\section{Notation and Preliminaries}
%--------------------------------------------------------------------

Throughout, all graphs are finite and simple, and all logarithms are
natural. For a graph $F$ and a set $X\subseteq V(F)$, write
\[
N_F(X)=\{y\in V(F)\setminus X: xy\in E(F)\text{ for some }x\in X\}
\]
for the external neighborhood of $X$ in $F$.

A \emph{subdivision} of a graph $P$ is obtained by replacing every edge of
$P$ by a path so that the replacing paths are internally vertex-disjoint.
Equivalently, the original vertices of $P$ become branch vertices, and two
replacing paths may meet only at a common prescribed branch vertex. A graph
$P$ is a \emph{minor} of a graph $G$ if there are pairwise disjoint
nonempty sets $W_v\subseteq V(G)$, indexed by $v\in V(P)$, such that every
$G[W_v]$ is connected and, for each $uv\in E(P)$, there are vertices 
$x_u \in W_u$ and $x_v \in W_v$ such that $x_ux_v \in E(G)$. The sets $W_v$ form a \emph{minor model} of $P$ in $G$.

For a positive integer $s$, write $[s]=\{1,\ldots,s\}$, and let $B_s(G)$
denote the $s$-\emph{blowup} of $G$: each vertex $x\in V(G)$ is replaced
by an independent set
\[
C_x=\{(x,1),\ldots,(x,s)\},
\]
in $B_s(G)$ and, for each edge $xy\in E(G)$, all edges between $C_x$ and $C_y$ are
added in $B_s(G)$.

If $G$ is an $r$-regular graph with adjacency matrix $A(G)$ and the eigenvalues of $A(G)$ are $r=\lambda_1\ge\lambda_2\ge\cdots\ge\lambda_N$. Define
\[
\lambda(G):=\max_{2\le i\le N}|\lambda_i|.
\]
In other words, $\lambda(G)$ is the second largest eigenvalue in absolute value of $A(G)$.

\begin{definition}
Let $0<\alpha<1$ and $t\ge 1$. A graph $F$ on $q$ vertices is an
$(\alpha,t)$-\emph{expander} if
\[
|N_F(X)|\ge t|X|
\qquad\text{for every }X\subseteq V(F)
\text{ with }|X|\le \frac{\alpha q}{t}.
\]
A graph is $m$-\emph{minor-universal} if it contains every graph with at
most $m$ vertices and at most $m$ edges as a minor.
\end{definition}

%--------------------------------------------------------------------
\section{Tools}
%--------------------------------------------------------------------

We use the following minor-universality theorem of Krivelevich and Nenadov.

\begin{theorem}[Krivelevich--Nenadov {\cite{KN}}]
\label{thm:KN}
For every $0<\alpha<1$ there are constants $\xi>0$ and
$q_0,t_0\in\mathbb{N}$ such that the following holds. If $F$ is an
$(\alpha,t)$-expander on $q\ge q_0$ vertices and $t\ge t_0$, then $F$ is
$m$-minor-universal for
\[
m=\xi\frac{q\log t}{\log q}.
\]
\end{theorem}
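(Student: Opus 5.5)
The plan is to embed an arbitrary graph $P$ with at most $m$ vertices and $m$ edges as a \emph{topological} minor-like structure, with each edge realized by a short path, while keeping the total number of used vertices a small constant fraction of $q$.

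First I reduce to $\Delta(P)\le 3$. Replace each vertex $v$ of degree $k$ by a path of $k$ new vertices and attach each edge at $v$ to a distinct vertex of this path. The new graph $P'$ has $O(m)$ vertices and edges, and $P$ is a minor of $P'$ (contract the paths). It therefore suffices to find a subdivision of $P'$ in $F$. In that subdivision each edge of $P'$ becomes a path of length at most $L=O(\log q/\log t)$, so the total number of used vertices is $O(mL)$. With $m=\xi q\log t/\log q$, this is at most $c\,\xi q$ for an absolute constant $c$.

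The key lemma is \emph{robustness of expansion under deletion}. Let $S\subseteq V(F)$ with $|S|\le \eta q$, where $\eta$ is small compared to $\alpha$. I claim there is a set $R\supseteq S$ with $|R\setminus S|\le 2|S|/t$ such that $F-R$ has the property that every $X$ with $|X|\le \alpha q/(2t)$ has $|N_{F-R}(X)|\ge t|X|/2$.

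To find $R$, repeatedly remove sets $X$ violating this property and let $Y$ be their union. Suppose $|Y|$ stays below $\alpha q/t$. Then by expansion in $F$, $Y$ has at least $t|Y|$ outside neighbours. At most $t|Y|/2$ of these lie outside $S$ (by the violations), so $|S|\ge t|Y|/2$, i.e. $|Y|\le 2|S|/t$. Choosing $\eta<\alpha/4$ makes $2|S|/t<\alpha q/(2t)$, which closes the induction so $Y$ never reaches the cap.

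A standard consequence is \emph{short connections}. In $F-R$, a ball around any surviving vertex grows by a factor of about $t/2$ per step until it reaches size about $\alpha q/(2t)$, which takes $O(\log q/\log t)$ steps. Two sets of size about $\alpha q/(2t)$ in an expander of this type are then at distance $O(1)$: expand one of them once more to get linear size, then use the expansion of the other. So any two surviving vertices are joined by a path of length $L=O(\log q/\log t)$ avoiding $R$.

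The embedding is greedy.
\begin{itemize}
\item Choose images of the branch vertices of $P'$ far apart inside the expanding part, each with at least $3$ reserved neighbours.
\item Process the edges of $P'$ one at a time. Let $S$ be the set of all vertices used so far (by previous paths and by branch vertices other than the two current endpoints), recompute $R$ for this $S$, and connect the two endpoints through their reserved neighbours by a path of length at most $L$ in $F-R$.
\end{itemize}
Since $|S|\le c\,\xi q+O(m)\le \eta q$ throughout once $\xi$ is small and $q_0,t_0$ are large, the lemma applies at every step.

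The main obstacle is keeping the two current endpoints alive, i.e. not inside the pruned set $R\setminus S$, since pruning may delete them. I would handle this by requiring each branch vertex's reserved neighbourhood to itself be expanding, for instance by replacing each branch vertex with a small tree of size about $t$ grown in advance. Then the pruned set, of size $O(|S|/t)$, cannot swallow all of it. Making this bookkeeping consistent with the degree bound and the size cap $\alpha q/t$ is where I expect most of the work, and where the precise dependence of $\xi, q_0, t_0$ on $\alpha$ gets fixed.
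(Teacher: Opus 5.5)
The paper does not prove this theorem; it imports it from \cite{KN} and uses it as a black box, so your argument has to stand on its own. Your reduction to $\Delta(P')\le 3$ is fine, and so is your deletion-robustness lemma, which is similar in spirit to the pruning in Lemma~\ref{lem:robust}. The step that fails is the short-connection claim.

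The hypothesis says nothing about sets larger than $\alpha q/t$. Expanding a set of size about $\alpha q/(2t)$ once more gives only about $\alpha q/4$ vertices, and two such sets need not meet or be adjacent. Concretely, for $\alpha<1/2$ the disjoint union of two connected $(2\alpha,t)$-expanders on $q/2$ vertices each is an $(\alpha,t)$-expander. Adding a single edge between the halves keeps it an $(\alpha,t)$-expander, but deleting one endpoint of that edge (which may well lie in $S$) separates the halves. So ``any two surviving vertices are joined by a path of length $L$ avoiding $R$'' is false. Small $\alpha$ is exactly the regime the paper needs ($\alpha=1/(10s)$).

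You need an extra step that first passes to a linear-size piece of $F$ with no sparse vertex separators:
\begin{itemize}
\item While some $Z$ with $|Z|\le\eta' q$ splits the current piece into two sides of size at least $\beta q$ with no edges between them, keep one side.
\item Re-prune with a variant of your lemma that loses only a factor of roughly $1-\delta$ in $t$ and in $\alpha$ per round, with $\delta$ comparable to $\beta$ and $\eta'$ much smaller still.
\item There are at most $1/\beta$ rounds, so the expansion parameters degrade by a bounded factor.
\end{itemize}
In the final piece, with $|S\cup R|\ll\eta' q$, breadth-first search from one ball of size at least $\beta q$ must gain at least $\eta' q/2$ new vertices per step until it reaches the other ball. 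Otherwise you would have a sparse separator. This gives connecting paths of length $O(\log q/\log t)+O(1/\eta')$.

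The second gap is the one you flag yourself, and the suggested remedy does not work. $|R\setminus S|$ can be as large as $2|S|/t$. Since $|S|$ is linear in $q$, this is of order $\xi q/t$, far more than a pre-grown tree of size about $t$. Moreover, a set gets pruned exactly when most of its neighbourhood has been used. That is what happens to a current endpoint's tree once earlier paths have passed around it, so the whole tree can disappear.

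Re-running the pruning from scratch at each step cannot be rescued by local reservations. You need an invariant maintained throughout the greedy process. One option is a Friedman--Pippenger-type extendability condition: every small $X$ has $|N_F(X)\setminus S|$ at least a multiple of $|X|$, minus a correction for the degrees of $X$ into the embedded structure. This guarantees that every embedded vertex with spare degree keeps an expanding free neighbourhood. Pair it with a connecting lemma that adds a path of length $O(\log q/\log t)$ while preserving the invariant; that lemma in turn relies on the linking property from the previous paragraph.

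Alternatively, since only a minor is required, you can let branch sets be connected sets that absorb the interiors of connecting paths. This removes the need for reserved neighbours, but you still need some such invariant to guarantee that the current branch sets are not cut off by the used vertices.
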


We also use the existence of fixed-degree spectral expanders.
Friedman's second-eigenvalue theorem implies the following; see, for
example, Bordenave~\cite{Bordenave}.

\begin{proposition}
\label{prop:spectral-existence}
For every fixed integer $r\ge 3$ and every $\eps>0$, for all sufficiently
large integers $N$ with $rN$ even there exists an $N$-vertex $r$-regular
graph $G$ such that
\[
\lambda(G)\le 2\sqrt{r-1}+\eps.
\]
\end{proposition}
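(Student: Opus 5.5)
The plan is to derive the proposition from Friedman's theorem on random regular graphs instead of building an explicit graph. Fix $r\ge 3$ and $\eps>0$, and let $N$ be large with $rN$ even. Let $G_N$ be a uniformly random $r$-regular graph on $[N]$, or equivalently the configuration model conditioned on being simple. Since this conditioning event has probability bounded away from zero (about $e^{-(r^2-1)/4}$), any property that holds with probability $1-o(1)$ in the configuration model also holds with probability $1-o(1)$ for $G_N$. Friedman's theorem, in the form proved by Bordenave~\cite{Bordenave}, says that
\[
\Pr\bigl[\lambda(G_N)\le 2\sqrt{r-1}+\eps\bigr]\to 1 \quad (N\to\infty).
\]
Once $N$ is large enough that this probability is positive, some $r$-regular graph on $N$ vertices satisfies the bound, which is exactly the proposition.

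To explain why the cited result holds, I would follow Bordenave's non-backtracking argument. Let $B$ be the non-backtracking operator on the $rN$ directed edges. The Ihara--Bass formula relates the spectra of $A$ and $B$: every eigenvalue $\lambda\neq\pm r$ of $A$ gives eigenvalues $\mu$ of $B$ with $\mu^2-\lambda\mu+(r-1)=0$. So if every eigenvalue of $B$ other than the trivial ones $\pm(r-1)$ satisfies $|\mu|\le\sqrt{r-1}+\delta$, a short computation gives $|\lambda_i|\le 2\sqrt{r-1}+\eps$ for $i\ge 2$, where $\delta=\delta(\eps,r)$.

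The bound on $B$ comes from a high-moment trace method. Take $\ell\asymp\log N$ and write $B^\ell$ as the projection onto the Perron direction plus a remainder $R_\ell$. Then bound $\mathbb{E}\,\mathrm{tr}\bigl[(R_\ell R_\ell^*)^{m}\bigr]$ by counting non-backtracking path configurations whose edge multiplicities are compatible with the random matching. This gives $\|R_\ell\|\le(\sqrt{r-1}+\delta)^\ell$ with high probability.

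I expect the main obstacle to be the \emph{tangles}: subgraphs of radius $O(\log N)$ that contain two or more cycles. They inflate path counts and destroy the bound. The way around them is to show that, with high probability, $G_N$ is tangle-free at scale $\ell$. One then runs the trace estimate only on tangle-free paths, for which the combinatorial count is of order $(r-1)^{\ell/2}$ up to polynomial factors. For the purposes of this paper, I would simply cite~\cite{Bordenave} for this step, and the proposition follows as described in the first paragraph.
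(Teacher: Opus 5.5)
Your proposal is correct and follows the paper's route. The paper gives no argument of its own and simply invokes Friedman's second-eigenvalue theorem, citing Bordenave; your first paragraph supplies the deduction that the with-high-probability statement, taken along $N$ with $rN$ even, yields such a graph for all sufficiently large such $N$, and your sketch of Bordenave's non-backtracking, tangle-free trace argument is accurate in outline but not needed, since both you and the paper rest on the citation.
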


For completeness, we record the form of the Expander Mixing Lemma that
will be used.

\begin{lemma}\label{lem:EML}
Let $G$ be an $N$-vertex $r$-regular graph with $\lambda(G)\le\lambda$.
If $X,Z\subseteq V(G)$ are disjoint, then
\[
\left|e_G(X,Z)-\frac{r}{N}|X||Z|\right|
   \le \lambda\sqrt{|X||Z|},
\]
where $e_G(X,Z)$ is the number of edges with one endpoint in $X$ and the
other in $Z$.
\end{lemma}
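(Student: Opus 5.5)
The statement immediately above is the Expander Mixing Lemma (Lemma~\ref{lem:EML}), and I will prove it with the standard spectral argument. Let $A=A(G)$ have orthonormal eigenvectors $v_1,\dots,v_N$ with eigenvalues $r=\lambda_1\ge\cdots\ge\lambda_N$. Since $G$ is $r$-regular, we may take $v_1=\mathbf{1}/\sqrt{N}$. Because $X$ and $Z$ are disjoint, each edge between them is counted exactly once by the bilinear form, so
\[
e_G(X,Z)=\mathbf{1}_X^{T}A\,\mathbf{1}_Z .
\]

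I will expand the indicator vectors as $\mathbf{1}_X=\sum_i a_i v_i$ and $\mathbf{1}_Z=\sum_i b_i v_i$. Then $a_1=|X|/\sqrt N$ and $b_1=|Z|/\sqrt N$, and Parseval gives
\[
\sum_i a_i^2=|X|, \qquad \sum_i b_i^2=|Z| .
\]
Substituting into the bilinear form,
\[
\mathbf{1}_X^{T}A\,\mathbf{1}_Z=\sum_i \lambda_i a_i b_i = \frac{r}{N}|X||Z| + \sum_{i\ge 2}\lambda_i a_i b_i .
\]

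It remains to bound the error term. Using $|\lambda_i|\le\lambda$ for $i\ge2$ and then Cauchy--Schwarz,
\[
\Bigl|\sum_{i\ge2}\lambda_i a_ib_i\Bigr|\le \lambda\Bigl(\sum_{i\ge2}a_i^2\Bigr)^{1/2}\Bigl(\sum_{i\ge2}b_i^2\Bigr)^{1/2}\le\lambda\sqrt{|X||Z|} .
\]
This is exactly the claimed inequality. One could sharpen the bound slightly by using $\sum_{i\ge2}a_i^2=|X|(1-|X|/N)$ instead of $|X|$, but that refinement is not needed here.

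No step is a real obstacle. The only points requiring care are that disjointness ensures no edge is double-counted in $e_G(X,Z)$, and that the all-ones direction must be separated out before applying the bound $|\lambda_i|\le\lambda$, since $\lambda_1=r$ is not controlled by $\lambda$.
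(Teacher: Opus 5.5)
Your proof is correct: it is the standard spectral proof of the Expander Mixing Lemma, and you handle the points that need care. Disjointness ensures $\mathbf{1}_X^{T}A\,\mathbf{1}_Z$ counts each $X$--$Z$ edge exactly once, and you split off $v_1=\mathbf{1}/\sqrt{N}$ (always choosable, even if $r$ is a repeated eigenvalue) before using $|\lambda_i|\le\lambda$. The paper gives no proof of this lemma at all---it records it ``for completeness'' as a known tool---so your argument supplies exactly the standard derivation being cited.
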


%--------------------------------------------------------------------
\section{Proof of Theorem~\ref{thm:main}}
%--------------------------------------------------------------------

We first extract a small-set expander from every large subset of a
sufficiently strong spectral expander.

\begin{lemma}\label{lem:robust}
Fix $s\in\mathbb{Z}^+$. Let $T\ge 100$ and put
\[
\eta=\frac{1}{10s\sqrt{T}}.
\]
Let $G$ be an
$N$-vertex $r$-regular graph satisfying
$\lambda(G)\le\eta r$. Then every set $S\subseteq V(G)$ with
$|S|\ge N/s$ contains a set $U\subseteq S$ with $|U|>N/(3s)$ such that
$G[U]$ is a $(1/(10s),T)$-expander.
\end{lemma}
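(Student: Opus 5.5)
The plan is the standard ``peeling'' argument. Starting from $U_0=S$, repeatedly remove bad sets. If the current set $U_i$ contains a nonempty $X\subseteq U_i$ with $|X|\le \alpha|U_i|/T$ (where $\alpha=1/(10s)$) and $|N_{G[U_i]}(X)|<T|X|$, delete $X$ and continue with $U_{i+1}=U_i\setminus X$. To keep control, I would instead run the process cumulatively. Let $Y$ be the union of all removed sets, and stop once the current set is an expander or $|Y|$ exceeds a threshold. The goal is to show that $Y$ never becomes large, so that the final $U=S\setminus Y$ has $|U|>N/(3s)$ and is, by construction, a $(1/(10s),T)$-expander.

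The key step is a small-set non-expansion bound via Lemma~\ref{lem:EML}. Suppose $Y\subseteq S$ is a union of removed sets $X_1,\dots,X_k$, with each $X_j$ having fewer than $T|X_j|$ neighbors inside $U_{j-1}$. Then $Y$ has small external neighborhood inside $S$: set $Z=N_G(Y)\cap S$, so that $|Z| \le \sum_j T|X_j| = T|Y|$. This uses the fact that a neighbor of $X_j$ in $S$ either lies in $U_{j-1}$, where it is counted, or was removed earlier, in which case it lies in $Y$.

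Next, count edges from $Y$. Each vertex of $Y$ has $r$ neighbors in $G$. I would rather bound edges internal to $Y\cup Z$: take $W=Y\cup Z$, of size at most $(T+1)|Y|$. The edges leaving $Y$ and going into $S$ all land in $W$. The catch is that many edges of $Y$ may leave $S$ entirely, so edge counting relative to $S$ does not immediately work. I would fix this by tracking neighbors in all of $V(G)$ rather than in $S$. Since $|S|\ge N/s$, the complement $V\setminus S$ is large, so this does not help directly. The alternative is to apply the mixing lemma to $Y$ versus $S\setminus W$, which has no edges from $Y$. With $|S\setminus W|\ge N/s-(T+1)|Y|$, Lemma~\ref{lem:EML} gives
\[
0=e(Y,S\setminus W)\ge \frac{r}{N}|Y||S\setminus W|-\eta r\sqrt{|Y||S\setminus W|},
\]
so $|Y||S\setminus W|\le \eta^2N^2$. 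As long as $|Y|\le N/(2s(T+1))$, which holds while the process is controlled, we get $|S\setminus W|\ge N/(2s)$. Hence $|Y|\le 2s\eta^2 N = N/(50sT)$.

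The main obstacle is the bootstrapping: showing $|Y|$ never crosses the threshold. Each removed $X$ has size at most $\alpha|U_i|/T\le N/(10sT)$. Just before $|Y|$ exceeds $N/(50sT)$, it is at most $N/(50sT)$, and one more removal pushes it to at most $N/(50sT)+N/(10sT)<N/(2s(T+1))$ (using $T\ge100$). So the mixing-lemma argument still applies to this slightly larger $Y$ and forces $|Y|\le N/(50sT)$, a contradiction. Therefore the process always terminates with $|Y|\le N/(50sT)$, and $|U|\ge N/s-N/(50sT)>N/(3s)$. Since the process stops only when no bad $X$ exists, $G[U]$ is a $(1/(10s),T)$-expander. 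The constants should be rechecked carefully, in particular that $\eta=1/(10s\sqrt T)$ gives $2s\eta^2=1/(50sT)$ and leaves the required slack.
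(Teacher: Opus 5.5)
Your proposal is correct and follows essentially the paper's argument. Both peel off small non-expanding sets, then apply the expander mixing lemma to the union $Y$ of removed sets against $S\setminus(Y\cup N_{G[S]}(Y))$ at the first time $|Y|$ crosses a threshold, with the overshoot bounded by the maximal size $N/(10sT)$ of a single removed set; the only differences are cosmetic (your threshold $N/(50sT)$ versus the paper's $a=N/(25sT)$, and your cap $\alpha|U_i|/T$ in place of the fixed $b=N/(10sT)$).
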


\begin{proof}
Set
\[
a=\frac{N}{25sT}
\qquad\text{and}\qquad
b=\frac{N}{10sT}.
\]
We first claim that
\begin{equation}\label{eq:medium-expansion}
|N_{G[S]}(X)|\ge T|X|
\end{equation}
for every $X\subseteq S$ satisfying $a\le |X|\le a+b$.

Suppose otherwise, and set
\[
Y=N_{G[S]}(X),
\qquad
Z=S\setminus(X\cup Y).
\]
There are no edges between $X$ and $Z$. Moreover,
\[
|X|\le a+b=\frac{7N}{50sT}
\qquad\text{and}\qquad
|Y|<T|X|\le\frac{7N}{50s}.
\]
Consequently,
\[
|Z|\ge \frac{N}{s}-\frac{7N}{50sT}-\frac{7N}{50s}
     \ge \frac{N}{3s},
\]
where the last inequality uses $T\ge100$. By
Lemma~\ref{lem:EML} and $e_G(X,Z)=0$,
\[
\frac{r}{N}|X||Z|
 \le \lambda(G)\sqrt{|X||Z|}.
\]
It follows that
\[
|X||Z|
 \le \frac{\lambda(G)^2}{r^2}N^2
 \le \eta^2N^2
 =\frac{N^2}{100s^2T}.
\]
Since $|Z|\ge N/(3s)$, this gives
\[
|X|\le\frac{3N}{100sT}<\frac{N}{25sT}=a,
\]
a contradiction. This proves \eqref{eq:medium-expansion}.

We now perform a pruning procedure. Start with $U_0=S$. As long as there
is a nonempty set $X_i\subseteq U_{i-1}$ such that
\[
|X_i|\le b
\qquad\text{and}\qquad
|N_{G[U_{i-1}]}(X_i)|<T|X_i|,
\]
delete $X_i$ and set $U_i=U_{i-1}\setminus X_i$.

We claim that the procedure deletes fewer than $a$ vertices in total.
Otherwise, let $k$ be the first index for which
\[
W=X_1\cup\cdots\cup X_k
\]
has size at least $a$. By the minimality of $k$,
\[
a\le |W|<a+b.
\]
Furthermore,
\[
N_{G[S]}(W)
 \subseteq \bigcup_{i=1}^k N_{G[U_{i-1}]}(X_i),
\]
and therefore
\[
|N_{G[S]}(W)|
 \le \sum_{i=1}^k |N_{G[U_{i-1}]}(X_i)|
 < T\sum_{i=1}^k|X_i|
 =T|W|.
\]
This contradicts \eqref{eq:medium-expansion}.

Let $U$ be the set remaining when the procedure terminates. Then
\[
|U|>|S|-a
 \ge\frac{N}{s}-\frac{N}{25sT}
 >\frac{N}{3s}.
\]
By the stopping rule,
\[
|N_{G[U]}(X)|\ge T|X|
\]
for every nonempty $X\subseteq U$ with $|X|\le b$; the same inequality is
trivial for $X=\varnothing$. Since
\[
\frac{|U|}{10sT}\le\frac{N}{10sT}=b,
\]
the graph $G[U]$ is a $(1/(10s),T)$-expander.
\end{proof}

Recall that $B_s(P)$ denotes the $s$-blowup of $P$. The next lemma shows how to obtain a subdivision of a graph $P$ from a minor of $P$ by taking blowups.

\begin{lemma}\label{lem:lift}
Let $s$ be a positive integer and let $P$ be a graph with
$\Delta(P)\le s$. If $P$ is a minor of $G$, then $B_s(G)$ contains a
subdivision of $P$.
\end{lemma}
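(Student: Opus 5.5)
Fix a minor model $\{W_v\}_{v\in V(P)}$ of $P$ in $G$. For each edge $uv\in E(P)$ choose an edge $x_{uv}x_{vu}\in E(G)$ with $x_{uv}\in W_u$ and $x_{vu}\in W_v$. The plan has three steps: pick branch vertices, route one path per edge of $P$, and use the $s$ copies in the blowup to keep these paths internally disjoint.

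\emph{Branch vertices.} For each $v\in V(P)$ fix a root $r_v\in W_v$ and a spanning tree $\mathcal{T}_v$ of the connected graph $G[W_v]$. The branch vertex of $v$ in $B_s(G)$ is $(r_v,1)$.

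\emph{Routing.} Label the edges of $P$ at $v$ by distinct indices in $[s]$. This is possible because $\deg_P(v)\le\Delta(P)\le s$. Let $e=uv$ carry label $i_v(e)$ at $v$ and $i_u(e)$ at $u$. The path for $e$ consists of three pieces:
\begin{itemize}
\item the $\mathcal{T}_v$-path from $r_v$ to $x_{vu}$, lifted into $B_s(G)$ by using copy $i_v(e)$ at every vertex after $r_v$;
\item the edge $(x_{vu},i_v(e))(x_{uv},i_u(e))$;
\item the $\mathcal{T}_u$-path from $x_{uv}$ back to $r_u$, using copy $i_u(e)$ at every vertex except $r_u$, which is entered as $(r_u,1)$.
\end{itemize}
Consecutive vertices of this walk are adjacent in $B_s(G)$, because adjacency in the blowup depends only on the first coordinates. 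Each tree path is a path in $G$, so its lift has distinct vertices. The two halves lie over the disjoint sets $W_v$ and $W_u$, so the whole walk is a genuine path.

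\emph{Disjointness.} This is the main check. Internal vertices of the path for $e$ lying over $W_v$ all have the form $(y,i_v(e))$ with $y\ne r_v$. Take two different edges $e,e'$ of $P$.
\begin{itemize}
\item If they share the endpoint $v$, their internal vertices over $W_v$ lie in different copies, since $i_v(e)\ne i_v(e')$.
\item Over any other set $W_w$, at most one of the two paths enters $W_w$ unless $w$ is a common endpoint, which is the previous case.
\item Internal vertices never equal a branch vertex $(r_w,1)$, because the root $r_w$ is visited only as the endpoint of a path.
\end{itemize}
One subtlety is that a label could be $1$, so that copy $1$ is also used at non-root vertices. This causes no conflict, since branch vertices are exactly the copies $(r_w,1)$, and non-root vertices are distinct from $r_w$.

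Hence the paths are internally vertex-disjoint and meet only at their prescribed branch endpoints. Together with the branch vertices they form a subdivision of $P$ in $B_s(G)$. Every path has length at least one, since the middle edge is always present. An edge case is $x_{vu}=r_v$; then the tree path is trivial and the argument is unaffected.
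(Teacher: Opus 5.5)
Your proof is correct and takes essentially the same route as the paper. You root each branch set at $(r_v,1)$, lift a path in $G[W_v]$ from the root to the attachment vertex using a distinct label $i_v(e)\in[s]$ for each edge at $v$, and join the two halves by the chosen edge of $G$.

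There are two cosmetic differences. You take routing paths from a spanning tree of $G[W_v]$, whereas the paper allows any simple path. When $x_{vu}=r_v$, your middle edge should start at $(r_v,1)$ rather than $(r_v,i_v(e))$; the paper's phrasing ``join the terminal vertices of the lifted paths'' handles this automatically.
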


\begin{proof}
Choose a minor model $\{W_v:v\in V(P)\}$ of $P$ in $G$. For every edge
$e=uv\in E(P)$, fix an edge of $G$ between $W_u$ and $W_v$, and write it
as
\[
x_{u,e}x_{v,e},
\qquad
x_{u,e}\in W_u,
\quad
x_{v,e}\in W_v.
\]
For each $v\in V(P)$, choose a root $z_v\in W_v$ and an injection
\[
\iota_v:
\{e\in E(P):e\text{ is incident with }v\}
\longrightarrow [s].
\]
For each edge $e$ incident with $v$, choose a simple path in $G[W_v]$ from
$z_v$ to $x_{v,e}$, say
\[
R_{v,e}=z_v=x_0,x_1,\ldots,x_k=x_{v,e}.
\]
Lift this path to $B_s(G)$ as
\[
(z_v,1),(x_1,\iota_v(e)),\ldots,(x_k,\iota_v(e));
\]
when $k=0$, the lifted path consists only of $(z_v,1)$. The lift is a
path because consecutive clusters are joined by complete bipartite graphs.

For a fixed $v$, the lifted paths corresponding to distinct incident edges
meet only at $(z_v,1)$: away from the root they use distinct labels, and
the underlying paths are simple. Lifted paths belonging to distinct
vertices of $P$ are disjoint because the branch sets $W_v$ are disjoint.

For each edge $e=uv\in E(P)$, join the terminal vertex of the lifted path
for $(u,e)$ to the terminal vertex of the lifted path for $(v,e)$. This
edge exists because $x_{u,e}x_{v,e}\in E(G)$ and the corresponding
clusters form a complete bipartite graph in $G$. Concatenating the two lifted paths with
this joining edge gives the path that replaces $e$.

Let $J$ be the subgraph consisting of all root vertices $(z_v,1)$, all
lifted paths, and all joining edges constructed above. In particular, if
$v$ is isolated in $P$, then $(z_v,1)$ is retained as an isolated branch
vertex of $J$. The replacing paths are internally vertex-disjoint and meet
precisely at their prescribed branch vertices. Hence $J$ is a subdivision
of $P$ in $B_s(G)$.
\end{proof}

We proceed now with the proof of our main theorem. 

\begin{proof}[\textbf{Proof of Theorem~\ref{thm:main}}]
Fix integers $d \ge 1$ and $s\ge2$. Apply Theorem~\ref{thm:KN} with $\alpha=1/(10s)$, and let
$\xi>0$ and $q_0,t_0\in\mathbb{N}$ be the resulting constants. Take
\[
T=\max\{100,t_0\}
\]
and put
\[
\eta=\frac{1}{10s\sqrt{T}}.
\]
Choose an integer $r\ge3$ sufficiently large such that
\begin{equation}\label{eq:r-choice}
2\sqrt{r-1}+1\le\eta r.
\end{equation}
Define
\[
D=sdr.
\]
Notice that this number depends only on $d$ and $s$.

Let $P$ be any graph with $\Delta(P)\le d$, and set
\[
m_P=\max\{|V(P)|,|E(P)|\}.
\]
Thus $P$ is among the graphs covered by $m_P$-minor-universality.

Choose an integer $N$ sufficiently large, subject to the parity condition
$rN$ even, so that Proposition~\ref{prop:spectral-existence} applies and
\begin{equation}\label{eq:N-choice}
N\ge 6s,
\qquad
\frac{N}{3s}\ge q_0,
\qquad\text{and}\qquad
\xi\frac{N\log T}{3s\log N}\ge m_P.
\end{equation}
Such admissible integers $N$ are unbounded, and $N/\log N\to\infty$ along
them. Proposition~\ref{prop:spectral-existence}, with $\eps=1$, therefore
gives an $N$-vertex $r$-regular graph $G$ such that
\[
\lambda(G)\le2\sqrt{r-1}+1\le\eta r.
\]

Let
\[
H=B_{sd}(G).
\]
Every vertex of $H$ has $sd$ neighbors in each of the $r$ clusters
corresponding to its neighbors in $G$. Hence
\[
\Delta(H)=sdr=D.
\]

Consider an arbitrary $s$-coloring of $V(H)$. For each $x\in V(G)$, assign to $x$ a majority color of its cluster $C_x$, breaking ties arbitrarily. At least $N/s$ vertices of $G$ receive the same assigned color. Without loss of generality, that color is red; let $S$ be the set of vertices in $V(G)$ assigned red. Thus $|S|\ge N/s$, and every cluster $C_x$ with $x\in S$ contains at least $d$ red vertices.

By Lemma~\ref{lem:robust}, there is a set $U\subseteq S$ with
$q:=|U|>N/(3s)$ such that
\[
Q:=G[U]
\]
is a $(1/(10s),T)$-expander. Since $N\ge6s$, we have $q>N/(3s)\ge2$, so all
logarithms below are positive. Moreover, $q\le N$ and $q>N/(3s)$. Hence,
\[
\xi\frac{q\log T}{\log q}
 \ge \xi\frac{N\log T}{3s\log N}
 \ge m_P.
\]
Also $q\ge q_0$ and $T\ge t_0$. Theorem~\ref{thm:KN} therefore implies
that $Q$ contains $P$ as a minor.

For each $x\in U$, choose exactly $d$ red vertices from the cluster
$C_x$. The subgraph induced by all the chosen vertices is a red copy of
$B_d(Q)$. Since $\Delta(P)\le d$ and $P$ is a minor of $Q$,
Lemma~\ref{lem:lift} gives a subdivision of $P$ inside this red copy of
$B_d(Q)$. Thus all vertices of the subdivision are red and this finishes the proof of the theorem.
\end{proof}

%--------------------------------------------------------------------
\section{Vertex Ramsey Theory for Trees and Bounded-Treewidth Graphs} \label{sec:treewidth}
%--------------------------------------------------------------------

In this section we prove that the families of trees, closed blowups of trees and graphs with bounded treewidth are vertex-$R_\Delta$-bounded.

We begin by recording the tree-embedding theorem of Friedman and
Pippenger, which is the main tool that we will use.

\begin{theorem}[Friedman--Pippenger~{\cite{FriedmanPippenger1987}}]
\label{thm:FP}
Let $d \ge 1$ and $n \ge 1$. If $H$ is a non-empty graph satisfying
\[
  |N_H(X)| \ge (d+1)|X|
\]
for every $X \subseteq V(H)$ with $|X| \le 2n-2$, then $H$ contains
every tree with at most $n$ vertices and maximum degree at most~$d$.
\end{theorem}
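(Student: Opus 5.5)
The plan is to build the embedding of a tree $R$ with at most $n$ vertices and $\Delta(R)\le d$ one vertex at a time, adding a leaf at each step. The argument is driven by a potential-type invariant that is strong enough to survive every extension; this is Friedman and Pippenger's "good embedding" approach. We may assume $R$ has exactly $n$ vertices, since a tree with fewer vertices can be extended to one with $n$ vertices and maximum degree at most $d$ when $d\ge 2$, and the case $d=1$ is trivial. Order $V(R)$ so that each vertex after the first has exactly one earlier neighbour. Let $R'$ be a partial subtree and $f:R'\to H$ an embedding. For $y\in f(R')$ put $A_f(y)=d-\deg_{R'}(f^{-1}(y))$, and for $y\notin f(R')$ put $A_f(y)=d$. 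Call $f$ \emph{good} if
\[
|N_H(X)\setminus f(R')|\ \ge\ \sum_{x\in X}A_f(x)-|X\cap f(R')| \quad\text{for every } X\subseteq V(H),\ |X|\le 2n-2 .
\]
For the one-vertex tree this reduces to $|N_H(X)\setminus\{f(r)\}|\ge d|X|-1$ when $f(r)\in X$, and to $|N_H(X)\setminus\{f(r)\}|\ge d|X|$ otherwise. Both follow from the hypothesis $|N_H(X)|\ge (d+1)|X|$, since removing one vertex costs at most $1\le |X|$. So any single vertex $f(r)$ is a good start. I expect the exact right-hand side may need tuning so that both the base case and the extension step go through; the invariant should be chosen with "slack" $|X|$ coming from the $+1$ in $d+1$.

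\emph{Extension step.} Let $f$ be good on $R'$, and let the next vertex $v$ be a new leaf attached to $u\in R'$, with $y=f(u)$. Then $A_f(y)\ge 1$, because $u$ still needs a neighbour. We need some $z\in N_H(y)\setminus f(R')$ such that $f\cup\{v\mapsto z\}$ is good. Adding $z$ lowers $A(y)$ by one and makes $z$ used, with $A(z)=d-1$. Only two kinds of sets $X$ can now violate the inequality. The first kind are sets with $z\in N_H(X)$: they lose one neighbour, and this is compensated exactly when $y\in X$, but not otherwise. The second kind are sets containing $z$ but not $y$, whose accounting changes by at most a bounded amount. Call $X$ \emph{critical} if it satisfies the goodness inequality with equality, or with the small excess that the extension destroys. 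The key structural step is that the function $X\mapsto |N_H(X)\setminus f(R')|-\sum_{x\in X}A_f(x)+|X\cap f(R')|$ is submodular, since $|N(\cdot)\setminus F|$ is submodular and the remaining terms are modular. Hence the critical sets of size at most $n-1$ are closed under union and intersection, provided the union still has size at most $2n-2$. This is exactly why the hypothesis is stated up to $2n-2$ rather than $n-1$.

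\emph{Main obstacle.} The hardest step is to show that a valid $z$ exists. The sketch is as follows. First, a critical set has size at most $n-1$. To see this, take a critical $X$ with $n-1<|X|\le 2n-2$. Goodness together with the expansion hypothesis gives $|N_H(X)\setminus f(R')|\ge (d+1)|X|-d|R'|$ when we compare against the $(d+1)|X|$ bound, and this exceeds the critical value. The computation uses $|f(R')|\le n-1$ and $\sum A\le d|X|$. Next, the union $X^*$ of all critical sets is itself critical, by the closure property above. It remains to show $N_H(y)\setminus (f(R')\cup N_H(X^*)\cup X^*)\neq\varnothing$. If $y\notin X^*$, apply goodness to $X^*\cup\{y\}$; because $A_f(y)\ge1$, this forces a neighbour of $y$ that is unused and outside $N(X^*)$. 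If $y\in X^*$, criticality of $X^*$ and $A_f(y)\ge 1$ play the same role. Choosing $z$ to be such a neighbour, no critical set detects the change, so the new embedding is good. Iterating this $n-1$ times embeds $R$, which completes the proof.
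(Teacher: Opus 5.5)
The paper does not prove this theorem; it cites it from Friedman and Pippenger. Your sketch therefore has to stand on its own, and it breaks at the extension step, in the case $y\in X^*$. Once $y\in X^*$, every neighbour of $y$ lies in $X^*\cup N_H(X^*)$. So the set $N_H(y)\setminus(f(R')\cup N_H(X^*)\cup X^*)$ that you need to be nonempty is always empty. This case genuinely occurs, for instance whenever $\{y\}$ is itself tight. The union of \emph{all} critical sets is the wrong object. The only sets that can lose when $v\mapsto z$ is added are those with $y\notin X$, $z\notin X$ and $z\in N_H(X)$. So only the critical sets avoiding $y$ matter. By your submodularity argument these are closed under union, so their union $Y$ is critical (or empty). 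You should then apply goodness to $Y\cup\{y\}$, which has size at most $n\le 2n-2$, and choose $z\in N_H(y)\setminus(f(R')\cup Y\cup N_H(Y))$.

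This repair does not go through with your invariant, because the extra term $-|X\cap f(R')|$ makes it too weak. Since $y\in f(R')$, that term gives $Y\cup\{y\}$ a free unit of slack. Goodness of $Y\cup\{y\}$ together with tightness of $Y$ then only yields $|N_H(y)\setminus(f(R')\cup Y\cup N_H(Y))|\ge A_f(y)-1$, which is $0$ when $A_f(y)=1$. Already $X=\{y\}$ shows that your notion of goodness allows $y$ to have no unused neighbour at all, so "good implies extendable" is false for it. Your case $y\notin X^*$ survives only because there $X^*\cup\{y\}$ is \emph{strictly} non-critical.

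The fix is to drop that term and use the standard condition $|N_H(X)\setminus f(R')|\ge\sum_{x\in X}A_f(x)$ for all $|X|\le 2n-2$.
\begin{itemize}
\item The base case still holds. If $f(r)\in X$, then $f(r)$ is not in the external neighbourhood and nothing is lost. Otherwise the one lost vertex is paid for by the $+1$ in $d+1$.
\item Sets with $|X|\ge n$ have surplus at least $(d+1)|X|-|f(R')|-d|X|\ge 1$. Your bound $(d+1)|X|-d|R'|$ does not give this.
\item Now $g(Y\cup\{y\})\ge 0$ and $g(Y)=0$ give at least $A_f(y)\ge 1$ admissible choices of $z$. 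None of them lies in or next to any critical set avoiding $y$.
\end{itemize}
Finally, the reduction to trees with exactly $n$ vertices is unnecessary, since the induction works for any tree with at most $n$ vertices.
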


\begin{theorem}\label{thm:trees-vertex-ramsey}
The family of trees is vertex-$R_\Delta$-bounded. More precisely, for every integers $d \ge 1$ and $s\ge2$ there is an integer $D$, depending only on $d$ and $s$, such that for every tree $T$ with $\Delta(T) \le d$ there is a graph $H$ with $\Delta(H) \le D$ in which every $s$-coloring of $V(H)$ contains a monochromatic copy of~$T$.
\end{theorem}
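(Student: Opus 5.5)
We reuse the expander machinery from the proof of Theorem~\ref{thm:main}, with Theorem~\ref{thm:FP} in place of Theorem~\ref{thm:KN}. Since Friedman--Pippenger embeds trees as genuine subgraphs, no blowup or minor-lifting step is needed: the host $H$ can simply be a spectral expander $G$. Set $T=\max\{100,d+1\}$ and $\eta=1/(10s\sqrt{T})$, and choose $r\ge3$ with $2\sqrt{r-1}+1\le\eta r$. Put $D=r$. Given a tree $P$ with $\Delta(P)\le d$ and $n=|V(P)|$ vertices, Proposition~\ref{prop:spectral-existence} (with $\eps=1$) gives, for all large $N$ with $rN$ even, an $N$-vertex $r$-regular graph $G$ with $\lambda(G)\le\eta r$. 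We take $N$ large enough that
\[
\frac{N}{30sT}\ge 2n-2,
\]
and set $H=G$, so $\Delta(H)=r=D$, which depends only on $d$ and $s$.

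Now fix an $s$-coloring of $V(H)$. Some color class $S$ has $|S|\ge N/s$. By Lemma~\ref{lem:robust}, there is $U\subseteq S$ with $|U|>N/(3s)$ such that $G[U]$ is a $(1/(10s),T)$-expander. So every $X\subseteq U$ with $|X|\le |U|/(10sT)$ satisfies
\[
|N_{G[U]}(X)|\ge T|X|\ge (d+1)|X|.
\]
Since $|U|/(10sT)>N/(30sT)\ge 2n-2$, the hypothesis of Theorem~\ref{thm:FP} holds for $G[U]$ with parameters $d$ and $n$. Hence $G[U]$, which is monochromatic, contains a copy of $P$.

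The main points to check are the following.
\begin{itemize}
\item Lemma~\ref{lem:robust} gives expansion $T|X|$ only for sets of size at most $|U|/(10sT)$. We therefore need the Friedman--Pippenger threshold $2n-2$ to fall below this bound, which is why $N$ must grow with $n$. This is harmless, because only the degree has to be bounded.
\item We need $T\ge d+1$. This is arranged through the choice of $T$, and that choice forces $r$, hence $D$, to depend on $d$.
\item The degenerate case $n=1$ is trivial.
\end{itemize}
Apart from these checks, the argument is a direct combination of the earlier results.
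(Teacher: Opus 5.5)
Your argument is the paper's own proof: the host is a plain $r$-regular spectral expander with $D=r$, you set $T=\max\{100,d+1\}$, apply Lemma~\ref{lem:robust} to a largest colour class, and then apply Theorem~\ref{thm:FP} inside $G[U]$.

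There is one arithmetic slip. From $|U|>N/(3s)$ you only get $|U|/(10sT)>N/(30s^2T)$, not $N/(30sT)$. So your condition $N/(30sT)\ge 2n-2$ only guarantees $|U|/(10sT)>(2n-2)/s$. That is not enough to cover all sets $X$ with $|X|\le 2n-2$ in the hypothesis of Theorem~\ref{thm:FP}. The fix is to require $N\ge 30s^2T(2n-2)$, which is exactly the paper's choice. This costs nothing, since $N$ may depend on $n$.
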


\begin{proof}
Fix integers $d \ge 1$ and $s\ge2$. Set $T_0 = \max(d+1, 100)$ and put
\[
  \eta = \frac{1}{10s\sqrt{T_0}}.
\]
Choose an integer $r \ge 3$ large enough such that $2\sqrt{r-1}+1 \le \eta r$,
and set $D = r$. Notice that $D$ depends only on $d$ and $s$.

Let $T$ be any tree with $\Delta(T) \le d$ and $n = |V(T)|>1$ (if $n=1$ the result is trivial). Take $N \ge 30s^2(2n-2)T_0$ sufficiently large with $rN$ even. By Proposition~\ref{prop:spectral-existence} with $\eps = 1$, there exists an $N$-vertex $r$-regular graph $G$ with
\[
  \lambda(G) \le 2\sqrt{r-1}+1 \le \eta r.
\]

Consider an arbitrary $s$-coloring of $V(G)$. At least $N/s$ vertices receive the same color; without loss of generality that color is red. Let $S$ be the set of red vertices, so $|S| \ge N/s$. By Lemma~\ref{lem:robust}, there is a set $U \subseteq S$ with $q := |U| > N/(3s)$ such that $G[U]$ is a $(1/(10s), T_0)$-expander.

We verify the hypothesis of Theorem~\ref{thm:FP}. Since
\[
  q > \frac{N}{3s} \ge 10s(2n-2)T_0,
\]
we have $q/(10s\,T_0) > 2n-2$. Hence for every $X \subseteq V(G[U])$ with $|X| \le 2n-2$, the $(1/(10s),T_0)$-expansion condition applies and gives
\[
  |N_{G[U]}(X)| \ge T_0|X| \ge (d+1)|X|.
\]
By Theorem~\ref{thm:FP}, $G[U]$ contains $T$ as a subgraph. Since every
vertex of $G[U]$ belongs to $S$, this is a monochromatic copy of~$T$.
\end{proof}

Using the majority-coloring strategy from the proof of
Theorem~\ref{thm:main}, we extend this result to closed blowups of trees.

\begin{corollary}\label{cor:blowups-vertex-ramsey}
The family of closed blowups of trees is vertex-$R_\Delta$-bounded. More precisely, for every integers $d \ge 1$, $s\ge2$ and $k \ge 1$ there is an integer $D$, depending only on $d$, $s$ and $k$, such that for every tree $T$ with $\Delta(T) \le d$ there is a graph $H$ with $\Delta(H) \le D$ in which every $s$-coloring of $V(H)$ contains a monochromatic copy of the closed $k$-blowup~$T[k]$.
\end{corollary}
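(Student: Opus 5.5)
The plan is to combine Theorem~\ref{thm:trees-vertex-ramsey} with a blowup-and-majority argument, as in the proof of Theorem~\ref{thm:main}. The host will be a closed blowup of the host for trees. Let $H_0$ be the graph given by Theorem~\ref{thm:trees-vertex-ramsey} for the tree $T$, with $s$ colors and maximum degree bound $d$, so that $\Delta(H_0)\le D_0(d,s)$. Set $m=s(k-1)+1$ and let
\[
H = H_0[m],
\]
the closed $m$-blowup of $H_0$: each $x\in V(H_0)$ becomes a clique $K_x$ of size $m$, and each edge becomes a $K_{m,m}$. Every vertex of $H$ has $m-1$ neighbours inside its own clique and $m$ neighbours in each of at most $D_0$ adjacent cliques. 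Hence
\[
\Delta(H)\le m-1+mD_0,
\]
which depends only on $d$, $s$ and $k$. (Using $m=sk$ also works and avoids the pigeonhole arithmetic.)

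Given an $s$-coloring of $V(H)$, we assign to each $x\in V(H_0)$ a color that appears at least $k$ times in $K_x$. Such a color exists by pigeonhole, since $|K_x|=m>s(k-1)$. This defines an $s$-coloring of $V(H_0)$. By the choice of $H_0$, it contains a monochromatic copy of $T$, say in red, with embedding $\phi:V(T)\to V(H_0)$. Every vertex in the image of $\phi$ was assigned red, so each clique $K_{\phi(v)}$ contains at least $k$ red vertices.

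For each $v\in V(T)$ we pick a set $A_v$ of exactly $k$ red vertices in $K_{\phi(v)}$. The sets $A_v$ are disjoint because $\phi$ is injective. Each $A_v$ is a clique in $H$, since $K_{\phi(v)}$ is a clique. For every edge $uv\in E(T)$, $\phi(u)\phi(v)$ is an edge of $H_0$, so $A_u$ and $A_v$ are completely joined in $H$. The sets $\{A_v\}$ therefore span a red copy of $T[k]$, as required.

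There is no real obstacle. The only point needing care is to check that the tree theorem is applied with parameters depending only on $d$ and $s$, and that the clique edges inside each $K_x$ are counted in the degree bound. This explains why the closed blowup, rather than the independent-set blowup $B_m$, is used.
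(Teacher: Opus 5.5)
Your proposal is correct and is essentially the paper's argument. The paper also takes the closed $sk$-blowup of the expander host from Theorem~\ref{thm:trees-vertex-ramsey}, gives each cluster a majority color, finds $T$ among the majority-red clusters, and lifts it by choosing $k$ red vertices per cluster; the only difference is that the paper re-runs Lemma~\ref{lem:robust} and Theorem~\ref{thm:FP} on the majority-red set, whereas you apply Theorem~\ref{thm:trees-vertex-ramsey} as a black box to the induced coloring, which is slightly cleaner and shows the same reduction works for closed blowups of any graph that has a bounded-degree vertex-Ramsey host.
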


\begin{proof}
Fix integers $d,k \ge 1$ and $s\ge2$. Let $r$, $T_0$, and $\eta$ be the constants from the proof of Theorem~\ref{thm:trees-vertex-ramsey}, and set
\[
  D = sk(r+1)-1.
\]
This depends only on $d,k$ and $s$.

Let $T$ be any tree with $\Delta(T) \le d$ and $n = |V(T)|$. Choose $N \ge 30s^2(2n-2)T_0$ with $rN$ even, and let $G$ be an $N$-vertex $r$-regular graph with $\lambda(G) \le \eta r$, as given by Proposition~\ref{prop:spectral-existence}. Set $H = G[sk]$, the closed $sk$-blowup of~$G$. Each vertex of $H$ lies in a clique $C_x$ of size $sk$ and is adjacent to all $sk$ vertices in each of the $r$ neighboring clusters, so
\[
  \Delta(H) = (sk-1) + skr = sk(r+1)-1 = D.
\]

Consider an arbitrary  $s$-coloring of $V(H)$. For each $x \in
V(G)$, assign to $x$ the majority color in its cluster $C_x$, breaking
ties arbitrarily. At least $N/s$ vertices of $G$ receive the same majority color; without loss of generality that color is red. Let $S\subseteq V(G)$ be the set of majority-red vertices. Every cluster $C_x$ with $x \in S$ contains at least $k$ red vertices.

By Lemma~\ref{lem:robust} and the argument of Theorem~\ref{thm:trees-vertex-ramsey}, there is a set $U \subseteq S$ with $|U| > N/(3s)$ such that $G[U]$ is a $(1/(10s),T_0)$-expander, and Theorem~\ref{thm:FP} yields an embedding $\phi \colon V(T) \to U$ of $T$ in $G[U]$.

For each $v \in V(T)$, choose exactly $k$ red vertices from
$C_{\phi(v)}$; call this set $R_v$. The sets $\{R_v : v \in V(T)\}$ are
pairwise disjoint since $\phi$ is injective. Since $C_{\phi(v)}$ is a
clique of size $sk$ in $H$, the set $R_v$ induces a copy of $K_k$.
For each edge $vw \in E(T)$, the edge $\phi(v)\phi(w)$ lies in $E(G)$,
so all edges between $C_{\phi(v)}$ and $C_{\phi(w)}$ are present in $H$;
in particular, $R_v$ and $R_w$ span a copy of $K_{k,k}$. Therefore the
subgraph of $H$ induced by $\bigcup_{v \in V(T)} R_v$, together with all
edges between $R_v$ and $R_w$ for $vw \in E(T)$, is a monochromatic red
copy of $T[k]$.
\end{proof}

Recall that every graph of bounded treewidth and bounded maximum degree is
a subgraph of the closed blowup of a bounded-degree
tree (Lemma~\ref{lem:bddtreewidth}). Together with Corollary~\ref{cor:blowups-vertex-ramsey}, this gives the following.

\begin{corollary}\label{cor:treewidth-vertex-ramsey}
The family of graphs with bounded treewidth is vertex-$R_\Delta$-bounded.
More precisely, for every integers $s \ge 2$,  $d \ge 1$ and $w \ge 0$ there exists an integer $D$, depending only on $s, d$ and $w$, such that for every graph $P$ with maximum degree at most $d$ and treewidth at most $w$, there is a graph $H$ with maximum degree at most $D$ such that every $s$-coloring of $V(H)$ yields a monochromatic copy of $P$.
\end{corollary}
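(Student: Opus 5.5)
The corollary follows by composing Lemma~\ref{lem:bddtreewidth} with Corollary~\ref{cor:blowups-vertex-ramsey}. The only care needed is that all parameters depend only on $s$, $d$ and $w$.

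Fix $s\ge 2$, $d\ge 1$ and $w\ge 0$, and let $P$ have $\Delta(P)\le d$ and $\mathrm{tw}(P)\le w$. If $w=0$ or $d=0$ the graph $P$ is edgeless. In that case a single vertex-free situation is handled directly: an independent set of $s|V(P)|$ vertices (maximum degree $0$) has a colour class of size at least $|V(P)|$. Alternatively, we may simply replace $w$ and $d$ by $\max\{w,1\}$ and $\max\{d,1\}$, which is harmless because the statement only requires upper bounds. So assume $w,d\ge 1$. Lemma~\ref{lem:bddtreewidth} is stated with equalities $\mathrm{tw}(P)=w'$ and $\Delta(P)=d'$, where $w'\le w$ and $d'\le d$. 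It gives a tree $T$ with $\Delta(T)\le 18w'd'^2\le 18wd^2$ such that $P\subseteq T[18w'd']$. Since $T[k]\subseteq T[k']$ for $k\le k'$, we also get $P\subseteq T[k]$ with $k:=18wd$.

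Next, apply Corollary~\ref{cor:blowups-vertex-ramsey} with parameters $d_0:=18wd^2$, $s$ and $k$. This yields an integer $D=D(d_0,s,k)$, which depends only on $s$, $d$ and $w$, together with a graph $H$ with $\Delta(H)\le D$. Every $s$-colouring of $V(H)$ of this graph contains a monochromatic copy of $T[k]$. Because $P$ is a subgraph of $T[k]$, that monochromatic copy contains a monochromatic copy of $P$ inside the same colour class, which completes the argument.

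There is no real obstacle here. The one point to verify is the monotonicity used above: that a graph with smaller treewidth or degree still embeds into the blowup with the uniform parameters $k=18wd$ and $\Delta(T)\le 18wd^2$. This holds because both bounds in Lemma~\ref{lem:bddtreewidth} are increasing in $w$ and $d$, and because closed blowups are monotone in $k$.
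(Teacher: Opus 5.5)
Your proposal is correct and is the paper's own argument: the paper obtains this corollary in one line by combining Lemma~\ref{lem:bddtreewidth} with Corollary~\ref{cor:blowups-vertex-ramsey}, and your bookkeeping (monotonicity of $T[k]$ in $k$, the uniform choices $k=18wd$ and $\Delta(T)\le 18wd^2$) only makes the dependence on $s,d,w$ explicit. One small fix: the degenerate case is really \emph{$P$ edgeless}, since then $\mathrm{tw}(P)=\Delta(P)=0$ even when $w,d\ge 1$ and the lemma literally yields $T[0]$; route that case through your direct independent-set argument rather than through the $\max\{w,1\}$ replacement.
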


%--------------------------------------------------------------------
\section{Concluding remarks}
%--------------------------------------------------------------------
One can consider the vertex Ramsey  problem with $\rho(G)=\chi(G)$, where $\chi(G)$ is the chromatic number of $G$. Perhaps surprisingly, this turns out to be rather straightforward as we observe below.
\begin{proposition} \label{prop:chi}
Let $K_{c;n}$ be the complete $c$-partite graph with parts of size $n$. Then
\[
R_\chi^v(K_{c;n}) = 2c - 1.
\]
\end{proposition}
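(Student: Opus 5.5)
We must show two inequalities: some graph $H$ with $\chi(H)=2c-1$ satisfies $H\xrightarrow{2}_{\mathrm{vtx}} K_{c;n}$ (here $s=2$, as the notation omits $s$), and every graph $H$ with $\chi(H)\le 2c-2$ fails this property.

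\emph{Lower bound.} Let $\chi(H)\le 2c-2$ and fix a proper coloring of $H$ with classes $I_1,\dots,I_{2c-2}$, some possibly empty. Color $I_1\cup\dots\cup I_{c-1}$ red and $I_c\cup\dots\cup I_{2c-2}$ blue. Each color class induces a subgraph with chromatic number at most $c-1$. Since $\chi(K_{c;n})=c$ for $n\ge1$, neither color class contains a copy of $K_{c;n}$. Hence $R^v_\chi(K_{c;n})\ge 2c-1$.

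\emph{Upper bound.} Take $H=K_{2c-1;M}$, the complete $(2c-1)$-partite graph with parts $V_1,\dots,V_{2c-1}$, each of size $M=2n-1$. Its chromatic number is exactly $2c-1$. Fix any red/blue coloring of $V(H)$. By pigeonhole, each part $V_i$ contains at least $n$ vertices of a single color; call that color the majority color of $V_i$. There are $2c-1$ parts and two colors, so again by pigeonhole at least $c$ parts share the same majority color, say red. In each of these $c$ parts choose $n$ red vertices. Since distinct parts are completely joined in $H$, the chosen vertices span a red $K_{c;n}$.

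The argument has no real obstacle. The only points to state carefully are the interpretation of the notation (two colors, and $\chi$ as the parameter $\rho$) and the fact that $\chi(K_{c;n})=c$, which is needed in the lower bound.
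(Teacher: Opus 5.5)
Your proof is correct and follows essentially the same approach as the paper: split an optimal $(2c-2)$-coloring into two halves of $c-1$ classes each for the lower bound, and apply pigeonhole twice in a complete $(2c-1)$-partite host for the upper bound. The only difference is that you take parts of size $2n-1$ instead of the paper's $2n$, which is equally valid.
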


\begin{proof}
If $c=1$ the result is trivial. Let then $c\ge2$. For the lower bound, suppose that $H$ is a graph with $\chi(H) \le 2c - 2$. Consider an optimal coloring of $H$ with at most $2c-2$ colors and color half of the color classes red and the other half blue. Since each color class induces a graph with chromatic number at most $c-1$, there is no monochromatic copy of $K_{c;n}$. This shows $ R_\chi^v(K_{c;n})\ge 2c-1$.
To see $R_\chi^v(K_{c;n}) \leq 2c - 1$, observe that $K_{2c-1;2n} \xrightarrow{2}_{vtx} K_{c;n}$ 
as the pigeonhole principle implies that in any 2-coloring of $V(K_{2c-1;2n})$, one of the colors has at least  $n$ vertices in at least $c$ classes.\end{proof}

The proof of the general version with $s$ colors is analogous and gives us the following result.

\begin{proposition} \label{prop:chis}
Let $K_{c;n}$ be the complete $c$-partite graph with parts of size $n$. Then
\[
R_\chi^v(K_{c;n};s) = s(c-1)+1.
\]
\end{proposition}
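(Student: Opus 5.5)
We prove the two inequalities separately, following the proof of Proposition~\ref{prop:chi} with $2$ replaced by $s$. If $c=1$ both sides equal $1$, since $K_{1;n}$ is edgeless and so $\chi=1$ is the trivial value. We therefore assume $c\ge2$.

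\emph{Lower bound.} Let $H$ be any graph with $\chi(H)\le s(c-1)$, and fix a proper coloring of $H$ with at most $s(c-1)$ color classes. Split these classes into $s$ groups of at most $c-1$ classes each, and give every vertex in the $i$-th group color $i$. The vertex set of color $i$ then induces a graph of chromatic number at most $c-1$. On the other hand, $K_{c;n}$ contains $K_c$ (take one vertex from each part), so $\chi(K_{c;n})=c$. Chromatic number is monotone under taking subgraphs, so no color class contains a copy of $K_{c;n}$. Hence no graph with $\chi(H)\le s(c-1)$ is a Ramsey host, and $R^v_\chi(K_{c;n};s)\ge s(c-1)+1$.

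\emph{Upper bound.} Take the host $H=K_{s(c-1)+1;\,s(n-1)+1}$, the complete multipartite graph with $M:=s(c-1)+1$ parts, each of size $s(n-1)+1$. It is complete $M$-partite, so $\chi(H)=M$. Fix an arbitrary $s$-coloring of $V(H)$; the argument is a double pigeonhole.
\begin{enumerate}
\item Each part has $s(n-1)+1$ vertices and $s$ colors, so some color appears at least $n$ times in that part. Call this the dominant color of the part, breaking ties arbitrarily.
\item There are $M=s(c-1)+1$ parts and only $s$ possible dominant colors, so some color $i$ is dominant in at least $c$ parts.
\item From each of these $c$ parts, choose $n$ vertices of color $i$.
\end{enumerate}
Vertices lying in distinct parts of $H$ are always adjacent. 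The chosen $cn$ vertices therefore span a complete $c$-partite graph with parts of size $n$, all of color $i$. This is a monochromatic copy of $K_{c;n}$, which gives $R^v_\chi(K_{c;n};s)\le s(c-1)+1$.

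The only point that needs care is the part size in the upper bound. The $2$-color proof uses parts of size $2n$; the analogue $sn$ would also work, but $s(n-1)+1$ is the exact pigeonhole threshold. Neither inequality requires more than this counting.
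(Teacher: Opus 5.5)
Your proof is correct and follows the paper's approach. The lower bound groups an optimal proper coloring into $s$ color groups, each containing at most $c-1$ classes. The upper bound is a double pigeonhole in a complete $(s(c-1)+1)$-partite host, which is exactly the $s$-color analogue of the paper's proof of Proposition~\ref{prop:chi}; your part size $s(n-1)+1$ instead of the paper's analogous $sn$ is immaterial.
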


If $G$ is any $n$-vertex graph with $\chi(G)=c$, then $G \subseteq K_{c;n}$ and this implies $R_\chi^v(G;s) \le s(c-1)+1$, by Proposition~\ref{prop:chis}. On the other hand, if $\chi(H)\le s(c-1)$ then its vertices can be partitioned into sets of size $s$, each inducing a graph of chromatic number at most $c-1$, so none can contain $G$. Thus, $R_\chi^v(G;s) = s(c-1)+1$.
Moreover, this generalizes naturally to $k$-uniform hypergraphs. Indeed, replacing the 
underlying graph structure with a $k$-uniform hypergraph $\mathcal{H}$, an analogous argument via the pigeonhole principle 
yields $R_\chi^v(K_{c;n}^{(k)};s) = s(c-1)+1$, where $K_{c;n}^{(k)}$ denotes the complete 
$c$-partite $k$-uniform hypergraph with parts of size $n$.

 The $n$ by $n$ square grid is the graph $P_n \square P_n$. One barrier to addressing Question~\ref{qn:vertex-main} is that we could not determine the answer for the grid graph. 
We end with this question.
\begin{question}
    Is the family of square grids vertex-$R_\Delta$-bounded?
\end{question}

\section*{Declaration of AI Assistance} ChatGPT pro was used to assist in producing the technical details in the proof of Lemma~\ref{lem:robust}. All other proofs in the paper, including the idea to use blowups of expanders and the Krivelevich-Nenadov result (Theorem~\ref{thm:KN}) to obtain minors  and then to apply Lemma~\ref{lem:lift} to obtain subdivisions  were due to the authors.
The authors  take full responsibility for the content of the paper, including all mathematical statements, proofs, and references.
\bibliographystyle{plain}
\bibliography{references}

\end{document}